\documentclass[11pt]{article}

\usepackage[notcite,notref,final]{showkeys}

\usepackage{latexsym, amsfonts, amsmath,amsthm, amssymb,color, ulem}

\setlength\topmargin{0in}
\setlength\headheight{0in}
\setlength\headsep{0.3in}
\setlength\textheight{8.7in}
 \setlength\textwidth{6.5in}
\setlength\oddsidemargin{0in}
\setlength\evensidemargin{0in}

\numberwithin{equation}{section}

\newcommand{\be}{\begin{equation}}
\newcommand{\ee}{\end{equation}}
\newcommand{\bea}{\begin{eqnarray}}
\newcommand{\eea}{\end{eqnarray}}
\newcommand{\bean}{\begin{eqnarray*}}
\newcommand{\eean}{\end{eqnarray*}}

\newcommand{\brray}{\begin{array}}
\newcommand{\erray}{\end{array}}
\newcommand{\ben}{\begin{equation}{nonumber}}
\newcommand{\een}{\end{equation}{nonumber}}


\theoremstyle{plain}
\newtheorem{theorem}{Theorem}[section]
\newtheorem{lemma}[theorem]{Lemma}

\newtheorem{proposition}[theorem]{Proposition}
\newtheorem{hypothesis}[theorem]{Hypothesis}
\newtheorem{corollary}[theorem]{Corollary}

\theoremstyle{definition}
\newtheorem{definition}[theorem]{Definition}
\newtheorem{example}[theorem]{Example}

\newtheorem{remark}[theorem]{Remark}

\newcommand{\bdfn}{\begin{dfn}}
\newcommand{\bthm}{\begin{thm}}
\newcommand{\blmma}{\begin{lmma}}
\newcommand{\bppsn}{\begin{ppsn}}
\newcommand{\bcrlre}{\begin{crlre}}
\newcommand{\bxmpl}{\begin{xmpl}}
\newcommand{\brmrk}{\begin{rmrk}}
\newcommand{\edfn}{\end{dfn}}
\newcommand{\ethm}{\end{thm}}
\newcommand{\elmma}{\end{lmma}}
\newcommand{\eppsn}{\end{ppsn}}
\newcommand{\ecrlre}{\end{crlre}}
\newcommand{\exmpl}{\end{xmpl}}
\newcommand{\ermrk}{\end{rmrk}}


\newcommand{\IC}{\mathbb{C}}








\newcommand{\cla}{{\cal A}}

\newcommand{\clq}{{\cal Q}}

\newcommand{\End}{\text{End}}

\def\a*{{\cal A}_{h,*}}
\def\B{{\cal B}(h)}
\def\B1{{\cal B}_1(h)}
\def\b{{\cal B}^{\rm s.a.}(h)}
\def\b1{{\cal B}^{\rm s.a.}_1(h)}

\newcommand{\ot}{\otimes}

\newcommand{\raro}{\rightarrow}

\makeatletter              
\let\c@equation\c@theorem  
\makeatother

\def\a*{{\cal A}_{h,*}}
\def\B{{\cal B}(h)}
\def\B1{{\cal B}_1(h)}
\def\b{{\cal B}^{\rm s.a.}(h)}
\def\b1{{\cal B}^{\rm s.a.}_1(h)}

\author{
Pavel Etingof
  \and
Debashish  Goswami
  \and
Arnab  Mandal
  \and
Chelsea  Walton}
  
  \title{Hopf coactions on commutative algebras generated by 
a quadratically independent comodule}

\AtEndDocument{\bigskip{\footnotesize%
  \textsc{Etingof: Department of Mathematics, Massachusetts Institute of Technology, Cambridge, Massachusetts 02151 USA} \par  
  \textit{E-mail address:} \texttt{etingof@math.mit.edu} \par
  \addvspace{\medskipamount}
   \textsc{Goswami: Stat-Math Unit, Indian Statistical Institute, 203, B. T. Road, Kolkata 700 208, India} \par  
  \textit{E-mail address:} \texttt{goswamid@isical.ac.in} \par
  \addvspace{\medskipamount}
   \textsc{Mandal: Stat-Math Unit, Indian Statistical Institute, 203, B. T. Road, Kolkata 700 208, India} \par  
  \textit{E-mail address:} \texttt{arnabmaths@gmail.com} \par
  \addvspace{\medskipamount}
   \textsc{Walton: Department of Mathematics, Temple University, Philadelphia, Pennsylvania 19122 USA} \par  
  \textit{E-mail address:} \texttt{notlaw@temple.edu} \par 
}}

\date{}

\begin{document}

 \maketitle
 
\begin{abstract}
 Let $\cla$ be a commutative unital algebra over an algebraically closed field $k$ of characteristic~$\neq~2$, whose generators  form a  finite-dimensional subspace $V$, with  no nontrivial homogeneous quadratic relations. 
 Let $\clq$ be a Hopf algebra that coacts on $\cla$ inner-faithfully, while leaving  $V$ invariant. We prove that
 $\clq$ must be commutative when either: 
 (i) the coaction preserves a non-degenerate bilinear form on $V$; or (ii) $\clq$ is co-semisimple, finite-dimensional, and char$(k)$~=~0.
\end{abstract}



\section{Introduction}
We work over an algebraically closed field $k$ of characteristic not equal to 2, unless stated otherwise.
In this note, we study quantum group symmetries, i.e. actions or coactions of Hopf algebras on function algebras of  classical spaces.  
Co-actions of {\it genuine} (i.e. noncommutative) Hopf algebras are of interest here, particularly coactions that are {\it inner-faithful}, 
that is, those that do not factor through the coaction of a proper Hopf subalgebra. 
We say that an algebra $\cla$ admits {\it No Quantum Symmetry} when there does not exist an inner-faithful coaction of a 
genuine Hopf algebra $\clq$ on $\cla$. 

Recently, the problem of establishing No Quantum Symmetry has been addressed in  both the algebraic and analytic frameworks. 
In the algebraic framework, the first and last author established that 
there does not exist an inner-faithful coaction of a noncommutative, finite-dimensional, co-semisimple Hopf algebra on a 
commutative domain, when $k = \bar{k}$ and char($k$)~=~0 \cite[Theorem~1.3]{EW_1}. In joint work with Joardar, the second author established  a similar result  in the analytic and infinite-dimensional 
  setting:   there does not exist a genuine 
compact quantum group that coacts inner-faithfully and isometrically (in the sense of \cite{Gos}) on the Frechet-$\ast$-algebra of smooth functions on a compact, connected, 
   smooth Riemannian manifold \cite[Theorem~12.7]{no_qiso}. 
    On the other hand, one obtains inner-faithful coactions of genuine Hopf algebras $\clq$ on function algebras  $\cla$ of classical spaces $X$ if the hypotheses on the theorems above are dropped;
     see \cite[Remark~4.3]{EW_1}, \cite[Theorem~1.2]{EW_2} and \cite[Theorem~3.7]{huang} for counterexamples. 

         The purpose of this paper is to establish No Quantum Symmetry on some  commutative $k$-algebras  as follows.            
                 
\begin{theorem}  \label{thm:main} Take $\cla$  to be a unital commutative $k$-algebra with a finite-dimensional generating subspace $V$ that is quadratically independent \textnormal{[}Definition~\ref{def:quad}\textnormal{]}. Let $\clq$ be a 
  Hopf algebra that coacts on $\cla$ inner-faithfully.  Suppose that one of the following conditions holds:
\begin{enumerate}
\item[\textnormal{(i)}] the coaction of $\clq$ on $\cla$ preserves a non-degenerate bilinear form  on $V$; or
\item[\textnormal{(ii)}] $k$ is algebraically closed of characteristic 0 and $\clq$ is co-semisimple and finite-dimensional.
\end{enumerate}
Then, $\clq$ is a commutative Hopf algebra. 
\end{theorem}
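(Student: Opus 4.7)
The strategy is to handle the two cases independently: (i) by a direct matrix-coefficient computation exploiting the antipode of $\clq$ and the non-degeneracy of $B$, and (ii) by a short reduction to \cite[Theorem~1.3]{EW_1}.

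For case (i), fix a basis $x_1, \ldots, x_n$ of $V$ and write $\alpha(x_i) = \sum_j x_j \otimes q_{ji}$; by inner-faithfulness, the matrix coefficients $q_{ji} \in \clq$ generate $\clq$ as a Hopf algebra. Applying $\alpha$ to the identity $x_i x_j = x_j x_i$ and using quadratic independence---which makes $\{x_i x_j\}_{i \le j}$ a basis of $V \cdot V$---to compare coefficients yields the row commutativity $[q_{ki}, q_{kj}] = 0$ together with the key identity
\begin{equation*}
[q_{ki}, q_{lj}] = [q_{kj}, q_{li}] \qquad \text{for all } i, j, k, l. \tag{$\star$}
\end{equation*}
Setting $r^{bd}_{ac} := [q_{ba}, q_{dc}]$, the relation $(\star)$ together with antisymmetry of the commutator forces $r^{bd}_{ac}$ to be antisymmetric in $(b, d)$ and symmetric in $(a, c)$.

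Next, preservation of $B$ translates to the matrix identity $Q^T B Q = B$ in $M_n(\clq)$ with $Q = (q_{ij})$, and combined with the antipode axiom this gives the explicit formula $S(q_{ij}) = (B^{-1} Q^T B)_{ij}$. Applying $S$ (an anti-homomorphism) to $(\star)$ and substituting this formula, then multiplying the resulting equation by $B_{ak} B_{cl}$ and summing over $k, l$ to invert the $B^{-1}$-factors, yields
\begin{equation*}
\sum_{b, d} \bigl(B_{bi} B_{dj} - B_{bj} B_{di}\bigr)\, r^{bd}_{ac} = 0 \qquad \text{for all } a, c, i, j.
\end{equation*}
Set $u^{ac}_{ij} := \sum_{b, d} B_{bi} B_{dj}\, r^{bd}_{ac}$. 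The previous line reads $u^{ac}_{ij} = u^{ac}_{ji}$, while the $(b, d)$-antisymmetry of $r^{bd}_{ac}$ (seen by relabeling $b \leftrightarrow d$ in the sum) gives $u^{ac}_{ij} = -u^{ac}_{ji}$. Combining under $\mathrm{char}(k) \neq 2$ forces $u^{ac}_{ij} = 0$; in matrix form this is $B^T R^{ac} B = 0$ with $R^{ac} = (r^{bd}_{ac})_{b, d}$, and invertibility of $B$ then gives $R^{ac} = 0$. So every commutator $[q_{ba}, q_{dc}]$ vanishes, and $\clq$ is commutative.

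For case (ii), extend the coaction on $V \subset \cla$ uniquely (by the universal property of the symmetric algebra) to a $\clq$-comodule algebra coaction on $\mathrm{Sym}(V) \cong k[y_1, \ldots, y_n]$. The canonical surjection $\pi \colon \mathrm{Sym}(V) \twoheadrightarrow \cla$ sending $y_i \mapsto x_i$ is a $\clq$-equivariant algebra map whose kernel is a $\clq$-stable ideal (lying in degrees $\ge 3$ by quadratic independence). Since the matrix coefficients of the coaction on $V$ coincide on both sides, inner-faithfulness on $\cla$ passes to $\mathrm{Sym}(V)$. As $\mathrm{Sym}(V)$ is a commutative domain and the standing hypotheses match those of \cite[Theorem~1.3]{EW_1}, $\clq$ must be commutative. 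The main technical obstacle is the execution of step (i)'s antipode substitution and index bookkeeping: the decisive cancellation between the symmetry from $S$-applied-$(\star)$ and the antisymmetry of $r^{bd}_{ac}$ in $(b, d)$ hinges on $\mathrm{char}(k) \neq 2$ for the division by two.
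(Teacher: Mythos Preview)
Your argument is correct. For case~(ii), your reduction to \cite[Theorem~1.3]{EW_1} via the coaction on $\mathrm{Sym}(V)$ is essentially the paper's proof (phrased there as $\clq^*$ acting on $TV/(\wedge^2V)=SV$). One small looseness: invoking ``the universal property of the symmetric algebra'' to obtain the comodule-algebra structure on $\mathrm{Sym}(V)$ is not quite right, since $\mathrm{Sym}(V)\otimes\clq$ need not be commutative. What you actually need is that $\wedge^2V\subset V\otimes V$ is a subcomodule---equivalently, that the images $\alpha(x_i)$ pairwise commute in $\mathrm{Sym}(V)\otimes\clq$---and this is exactly the content of your relation~$(\star)$, which holds by quadratic independence.

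For case~(i) your route genuinely differs from the paper's. The paper argues conceptually: quadratic independence makes $\wedge^2V$ a subcomodule of $V\otimes V$; the invariant form $B^2(a\otimes b,c\otimes d)=B(a,d)B(b,c)$ on $V\otimes V$ then has $S^2V$ as the orthogonal complement of $\wedge^2V$, so $S^2V$ is a subcomodule as well; once both summands of $V\otimes V=S^2V\oplus\wedge^2V$ are preserved, the FRT matrix $T$ commutes with the flip, i.e.\ $T^{13}T^{23}=T^{23}T^{13}$, and commutativity of $\clq$ follows. Your explicit antipode-and-commutator bookkeeping reaches the same conclusion by hand. The paper's version is shorter, makes the role of the $S^2V\oplus\wedge^2V$ decomposition transparent, and adapts immediately to the Hermitian/$*$-setting; your computation is self-contained, avoids the intermediate lemmas, and exposes the antipode explicitly rather than hiding it inside the ``orthogonal complement of an invariant subspace is invariant'' step.
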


The result above is an algebraic generalization of  \cite[Theorem~3.2]{Gos quad} and \cite[Lemma~12.1]{no_qiso} in the analytic setting.
With the exception of Remark~\ref{remark_1} and Corollary~\ref{cor}, we work in the algebraic framework so that there is no assumption on $\ast$-structures of the underlying algebras 
or on $\ast$-preservation of the coactions.

 \section{Results}
 
 We begin by introducing {\it quadratic independence}, a condition on the comodule algebra $\cla$ that we will impose throughout this work. 
 
\begin{definition} \label{def:quad}
 Let $\cla$ be a unital commutative algebra over $k$ generated by a finite-dimensional subspace $V \subseteq \cla$ . We say that $V$ is {\it quadratically independent} 
 if the natural linear map 
  from  $S^2 V:=(V \otimes V)/(v \otimes w - w \otimes v)_{v,w \in V}$ to $\cla$, sending $v \ot w$ to $vw = wv\in \cla$, is one-to-one. 
 \end{definition}
 
 Let us give an example of quadratic independence.
 
 \begin{example}   Suppose that a Hopf algebra $\clq$ coacts on the algebra $\cla$ of regular functions on a smooth affine algebraic, analytic, or $C^\infty$-manifold $X$, while preserving the ideal $I_x$ of $\mathcal{O}(X)$ consisting of functions vanishing on a point $x\in X$. Then, the cotangent space $T_x^*X=I_x/I_x^2$ is a quadratically independent $ \clq$-comodule. 
\end{example}
  
Now we verify that condition (i) of Theorem~\ref{thm:main} implies that $\clq$ is commutative; see Proposition~\ref{bilin} below. First, we need a preliminary result.

\begin{lemma} \label{lemma_1} 
  Suppose that $V$ is an inner-faithful finite-dimensional  comodule over a Hopf algebra $\clq$,
  and assume that the decomposition $V\otimes V=S^2V\oplus \wedge^2V $ is preserved by this coaction. Here, $\wedge^2V:=(V \otimes V)/(v \otimes w + w \otimes v)_{v,w \in V}$.
  Then, $\clq$ is commutative. 
\end{lemma}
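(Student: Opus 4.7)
The plan is to reinterpret the hypothesis as saying that the flip on $V\otimes V$ commutes with the coaction, extract from this the pairwise commutativity of the matrix coefficients, and then use inner-faithfulness together with a Cramer-type argument to conclude that all of $\clq$ is commutative.

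Since $\mathrm{char}(k)\neq 2$, the subspaces $S^2V$ and $\wedge^2V$ are exactly the $\pm 1$-eigenspaces of the swap $\tau\colon v\otimes w\mapsto w\otimes v$ on $V\otimes V$. Hence preservation of the decomposition $V\otimes V=S^2V\oplus\wedge^2V$ by the coaction is equivalent to $\tau$ being a comodule endomorphism, i.e.\ $(\tau\otimes\id)\circ\alpha_2=\alpha_2\circ\tau$, where $\alpha_2$ denotes the coaction on $V\otimes V$. I would fix a basis $\{v_1,\ldots,v_n\}$ of $V$ and write $\alpha(v_i)=\sum_j v_j\otimes q_{ji}$; then computing both sides of the commutation identity in this basis and equating coefficients gives directly
\[
q_{li}q_{jk}=q_{jk}q_{li}\qquad\text{for all }i,j,k,l,
\]
so the unital subalgebra $A\subseteq\clq$ generated by the matrix coefficients is commutative.

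The more delicate step is to promote commutativity of $A$ to commutativity of the Hopf subalgebra it generates, which by inner-faithfulness is all of $\clq$. Let $Q=(q_{ji})\in M_n(\clq)$; the antipode axioms give $S(Q)Q=QS(Q)=I$, so $S(Q)=Q^{-1}$ in $M_n(\clq)$. Since $A$ is commutative and $Q\in M_n(A)$, the determinant $\det(Q)\in A$ is well-defined, and it is the image of the group-like element $\det(x)\in O(M_n)$ under the bialgebra map $O(M_n)\to\clq$ sending $x_{ij}\mapsto q_{ij}$; hence $\det(Q)$ is group-like in $\clq$ and invertible with $\det(Q)^{-1}=S(\det(Q))$. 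A short manipulation using commutativity of $A$ shows that $\det(Q)^{-1}$ commutes with every element of $A$, so $A[\det(Q)^{-1}]\subseteq\clq$ is a well-defined commutative subalgebra. Cramer's rule inside this commutative ring produces $\det(Q)^{-1}\cdot\mathrm{adj}(Q)\in M_n(A[\det(Q)^{-1}])$ as an inverse of $Q$; by uniqueness of matrix inverses in $M_n(\clq)$ this coincides with $S(Q)$, and hence every $S(q_{ji})$ lies in $A[\det(Q)^{-1}]$. One then checks that $A[\det(Q)^{-1}]$ is $S$-stable, making it a commutative Hopf subalgebra of $\clq$ that contains every matrix coefficient. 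Inner-faithfulness now forces $\clq=A[\det(Q)^{-1}]$, and the proof is complete.

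The main obstacle I expect is precisely the last step, not the derivation of the commutator identity. Commutativity of the $n^2$ generators $q_{ji}$ does not a priori imply commutativity of the Hopf subalgebra they generate, since one must also control $S(q_{ji})$, and in principle $S^2(q_{ji})$, and so on. The essential trick is to exploit the group-like $\det(Q)$ to realise each $S(q_{ji})$ as an explicit element of $A[\det(Q)^{-1}]$ via Cramer's rule, after which closing under the antipode introduces no genuinely new noncommuting elements.
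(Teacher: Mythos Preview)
Your argument is correct and its core is exactly the paper's: the hypothesis forces $T^{13}T^{23}=T^{23}T^{13}$ (equivalently, $\tau$ is a comodule map), hence the matrix coefficients $q_{ji}$ pairwise commute, and inner-faithfulness finishes. The paper stops there, asserting that ``matrix elements of $T$ generate $\clq$ (by inner-faithfulness)'' and concluding commutativity; you have correctly noticed that inner-faithfulness only says the \emph{Hopf} subalgebra generated by the $q_{ji}$ is $\clq$, and you supply the extra step---$\det(Q)$ is group-like, Cramer's rule places each $S(q_{ji})$ in the commutative ring $A[\det(Q)^{-1}]$, and this ring is an $S$-stable sub-bialgebra---that makes the passage from ``matrix coefficients commute'' to ``$\clq$ is commutative'' rigorous. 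So your proof is the same approach as the paper's, but with a detail the paper leaves implicit made explicit.
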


\begin{proof}
Let the coaction $\alpha$ of $\clq$ on $V$ be given by 
    the Faddeev-Reshetikhin-Takhtajan matrix $T \in \End(V)\otimes \clq$; namely, $\alpha(v)=T(v\otimes 1)$. 
    Consider the natural $\clq$-coaction on $V\otimes V$ defined by the matrix $T^{13}T^{23}\in \End(V\otimes V)\otimes \clq$. Here, $T^{13}=\sum_{i,j}E_{ij}\otimes Id \otimes t_{ij}$, 
and $T^{23}=\sum_{i,j}Id \otimes E_{ij}\otimes t_{ij}$, for the elementary matrices $E_{ij}$.
The hypotheses imply that $T^{13}T^{23}$ lies in $(\End(S^2V)\otimes \clq) \oplus (\End(\wedge^2V)\otimes \clq)$,
that is, it commutes with the permutation that flips the two copies of $V$. Thus, $ T^{13}T^{23}=T^{23}T^{13}$, so
matrix elements of $T$ commute with each other. Since matrix elements of $T$ generate $\clq$ (by the inner-faithfulness
of $V$), we obtain that $\clq$ is commutative. \end{proof}

Let us fix the hypothesis below for the rest of the note, unless stated otherwise.
 
 \begin{hypothesis} \label{hyp}
 Let $\cla$ be a unital commutative $k$-algebra with a finite-dimensional, quadratically independent generating subspace $V$. Take $\clq$ be a Hopf algebra that coacts on $\cla$ inner-faithfully, leaving $V$ invariant, that is, for the coaction $\alpha$ of $\clq$ on $\cla$ we get that $\alpha(V) \subseteq V \otimes \clq$.  
    \end{hypothesis}

\begin{lemma} \label{lemma_2} Recall Hypothesis~\ref{hyp}. Then, the $\clq$-coaction preserves $\wedge^2V\subset V\otimes V$.
\end{lemma}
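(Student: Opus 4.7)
The plan is to identify $\wedge^2 V$ intrinsically as the kernel of the multiplication map $m: V\otimes V \to \cla$, and then use the comodule-algebra structure to transfer $\clq$-invariance from this kernel to $\wedge^2 V$.

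First, I would observe that since $\cla$ is commutative, $m(v\otimes w - w\otimes v) = vw - wv = 0$ for all $v,w \in V$, so $\wedge^2 V \subseteq \ker(m|_{V\otimes V})$. Conversely, quadratic independence (Definition~\ref{def:quad}) says precisely that $m$ restricted to $S^2 V$ is injective, and since $\mathrm{char}(k)\ne 2$ gives the decomposition $V\otimes V = S^2 V \oplus \wedge^2 V$, we conclude
\[
\ker(m|_{V\otimes V}) \;=\; \wedge^2 V.
\]

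Next I would exploit the fact that $\cla$ is a $\clq$-comodule algebra: the coaction $\alpha$ is multiplicative, so if $\alpha^{(2)}: V\otimes V \to V\otimes V \otimes \clq$ denotes the induced coaction (well-defined on $V\otimes V$ because $\alpha(V)\subseteq V\otimes \clq$ by Hypothesis~\ref{hyp}), then
\[
\alpha \circ m \;=\; (m \otimes \mathrm{id}_{\clq}) \circ \alpha^{(2)}.
\]
Take any $x \in \wedge^2 V = \ker m$ and write $\alpha^{(2)}(x) = \sum_i y_i \otimes q_i$ with the $q_i \in \clq$ linearly independent. Applying $m\otimes \mathrm{id}$ and using $m(x)=0$ gives $\sum_i m(y_i) \otimes q_i = 0$, hence $m(y_i)=0$ for each $i$, i.e. $y_i \in \wedge^2 V$. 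Therefore $\alpha^{(2)}(\wedge^2 V) \subseteq \wedge^2 V \otimes \clq$, as desired.

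There is no real obstacle here; the only subtlety is checking that the coaction on $\cla$ restricts to a well-defined coaction on $V\otimes V$, which is guaranteed by the invariance of $V$, and that $m$ is a comodule map, which follows from $\cla$ being a $\clq$-comodule algebra. Note also that $S^2V$ need not be preserved at this stage — only $\wedge^2V$ is forced to be a subcomodule — but together with the canonical splitting $V\otimes V = S^2 V \oplus \wedge^2 V$ in characteristic not $2$, this will set up the application of Lemma~\ref{lemma_1} in the next step of the paper.
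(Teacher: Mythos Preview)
Your proof is correct and follows essentially the same approach as the paper: both identify $\wedge^2 V$ as the kernel of the degree-two multiplication map $V\otimes V \to \cla$ (using commutativity of $\cla$ and quadratic independence of $V$), and then observe that this kernel is a subcomodule because multiplication is a $\clq$-comodule map. The paper phrases this more tersely via the comodule-algebra morphism $TV \to \cla$, but the content is identical.
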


\begin{proof} We have a $\clq$-comodule algebra morphism $TV \raro \cla$, which in degree 2 maps $ V\otimes V $ onto $S^2V\subset \cla$, due to the commutativity of $\cla$ and quadratic independence of $V$. The kernel is then $ \wedge^2V$, and it is thus preserved by the coaction. \end{proof}

\begin{proposition} \label{bilin} Recall Hypothesis~\ref{hyp} and suppose that $\clq$ preserves a nondegenerate bilinear form $B$ on $V$.
Then, $\clq$ is commutative.
\end{proposition}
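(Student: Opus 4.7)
The plan is to reduce Proposition~\ref{bilin} to Lemma~\ref{lemma_1} by proving that the symmetric tensors $S^{2}V \subset V \otimes V$ form a $\clq$-subcomodule; combined with the invariance of $\wedge^{2}V$ from Lemma~\ref{lemma_2}, this will yield the decomposition $V \otimes V = S^{2}V \oplus \wedge^{2}V$ of $\clq$-subcomodules that Lemma~\ref{lemma_1} requires.

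First, I will use the preserved nondegenerate form $B$ to build a $\clq$-equivariant isomorphism of comodules $\hat{B}:V\to V^{*}$ via $v \mapsto B(v,-)$. Writing the coaction of $\clq$ on $V$ by the matrix $T$ as in the proof of Lemma~\ref{lemma_1}, the preservation identity $T^{t} B T = B$ combined with the antipode identity $T\cdot S(T)=I$ in $M_{n}(\clq)$ gives $T^{t}B = B\,S(T)$, which is precisely the equivariance of $\hat{B}$. Tensoring, $\hat{B}\otimes\hat{B}:V\otimes V \to V^{*}\otimes V^{*}$ is then a $\clq$-equivariant isomorphism, and it manifestly commutes with the swap on each side, so it carries $S^{2}V$ bijectively onto $S^{2}V^{*}$. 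On the other hand, by a duality argument the invariant subspace $\wedge^{2}V \subset V\otimes V$ from Lemma~\ref{lemma_2} has an invariant annihilator in $(V\otimes V)^{*} = V^{*}\otimes V^{*}$, and a short computation identifies that annihilator as $S^{2}V^{*}$. Pulling back along the equivariant iso $\hat{B}\otimes\hat{B}$ then produces $S^{2}V = (\hat{B}\otimes\hat{B})^{-1}(S^{2}V^{*})$ as a $\clq$-subcomodule of $V\otimes V$. Combined with Lemma~\ref{lemma_2}, the decomposition $V\otimes V=S^{2}V\oplus\wedge^{2}V$ is preserved, and Lemma~\ref{lemma_1} then gives that $\clq$ is commutative.

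The step requiring the most care is the equivariance of $\hat{B}(v)=B(v,-)$; it is essential here to use this ``left'' pairing rather than $v\mapsto B(-,v)$, because the equivariance of the latter amounts to preservation of $B^{t}$, which a noncommutative $\clq$ need not inherit from the preservation of $B$ alone (verifying this equivalence would already force commutativity). Once $\hat{B}$ is established as a comodule map, the remaining ingredients---the commutation of $\hat{B}\otimes\hat{B}$ with the swap, the annihilator calculation, and the principle that invariant subspaces dualize to invariant subspaces---are formal.
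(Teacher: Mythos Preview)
Your argument is correct and follows the same route as the paper: invariance of $\wedge^{2}V$ from Lemma~\ref{lemma_2}, then use of $B$ to force invariance of $S^{2}V$, then Lemma~\ref{lemma_1}. The paper compresses your dual/annihilator step by directly introducing the invariant nondegenerate form $B^{2}(a\otimes b,\,c\otimes d)=B(a,d)B(b,c)$ on $V\otimes V$ and taking the orthogonal complement of $\wedge^{2}V$ under $B^{2}$, which is $S^{2}V$; after the equivariant identification $V^{*}\otimes V^{*}\cong (V\otimes V)^{*}$, this is exactly your map $\hat{B}\otimes\hat{B}$ followed by the annihilator.
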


\begin{proof} 
The form $B$ defines an invariant nondegenerate form $B^2$ on $V\otimes V$ given by $B^2(a \otimes b, c \otimes d)=B(a,d)B(b,c)$. 
By Lemma \ref{lemma_2}, $\wedge^2V$ is invariant under the coaction of $\clq$ on $V \otimes V$. The orthogonal complement
to $\wedge^2V$ in $V\otimes V$ under the form $B^2$ is $S^2V$. Hence, $S^2V\subset V\otimes V$ is also invariant under the coaction of $\clq$ on $V \otimes V$.
Now the result follows from Lemma~\ref{lemma_1}.
\end{proof}

\begin{remark} \label{remark_1}
Suppose that $k=\IC$ and $\clq$ is a Hopf-$\ast$ algebra such that the coaction $\alpha$ preserves a nondegenerate Hermitian form on $V$, that is, $\langle a_0,b_0\rangle a_1^*b_1=\langle a,b\rangle1_{\clq}$,
where we have written $\alpha(a)=a_0 \otimes a_1$ in Sweedler notation.
Then, we can adapt the proof of Proposition \ref{bilin} to get that $\clq$ is commutative. 
\end{remark}

\begin{corollary} \label{cor}
If $\clq$ is the Hopf-$\ast$ algebra associated to a compact quantum group satisfying  Hypothesis~\ref{hyp}, then $\clq$ must be commutative.
\end{corollary}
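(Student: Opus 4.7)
The plan is to reduce Corollary~\ref{cor} to the Hermitian version of Proposition~\ref{bilin} recorded in Remark~\ref{remark_1}. The only thing missing, compared to that remark, is the existence of a $\clq$-invariant nondegenerate Hermitian form on the generating subspace $V$.

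First, I would invoke the standard fact from the theory of compact quantum groups (Woronowicz): if $\clq$ is the canonical dense Hopf-$\ast$ algebra of polynomial functions on a compact quantum group, then every finite-dimensional $\clq$-comodule is unitarizable. That is, for any finite-dimensional right $\clq$-comodule $(V,\alpha)$ there exists a positive-definite Hermitian form $\langle\,\cdot\,,\,\cdot\,\rangle$ on $V$ such that, writing $\alpha(v)=v_0\otimes v_1$ in Sweedler notation,
\[
\langle a_0,b_0\rangle\, a_1^\ast b_1 \;=\; \langle a,b\rangle\, 1_{\clq}
\]
for all $a,b\in V$. This is usually proved by averaging an arbitrary Hermitian form on $V$ against the Haar state of the compact quantum group; since $V$ is finite dimensional this averaging lives entirely inside the polynomial Hopf-$\ast$ algebra $\clq$, so no analytic completion is required.

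Next, with such a $\clq$-invariant nondegenerate Hermitian form in hand on the generating subspace $V$, Hypothesis~\ref{hyp} is satisfied together with the extra condition assumed in Remark~\ref{remark_1}. Applying the argument sketched there — which mirrors the proof of Proposition~\ref{bilin}, using that $\wedge^2V$ is preserved by Lemma~\ref{lemma_2} and that its orthogonal complement with respect to the induced Hermitian form on $V\otimes V$ is $S^2V$ — one concludes that the decomposition $V\otimes V=S^2V\oplus\wedge^2V$ is preserved by the $\clq$-coaction. Lemma~\ref{lemma_1} then forces $\clq$ to be commutative.

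The only step that requires a genuine external input is the unitarizability of finite-dimensional comodules, which is a structural property of compact quantum groups and is thus not an obstacle in practice. Everything else is a direct appeal to earlier results in this note.
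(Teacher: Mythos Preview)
Your proposal is correct and follows essentially the same route as the paper: invoke unitarizability of finite-dimensional comodules over a compact quantum group (the paper cites \cite[Proposition~6.4]{van} for this) to obtain a $\clq$-invariant nondegenerate Hermitian form on $V$, and then apply Remark~\ref{remark_1}. The additional unpacking you give of Remark~\ref{remark_1} via Lemma~\ref{lemma_2} and Lemma~\ref{lemma_1} is accurate but not strictly needed, since the paper already records that reduction.
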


\begin{proof}
   By  applying  
  \cite[Proposition~6.4]{van} to the finite-dimensional $\clq$-comodule $V$,
  we get a nondegenerate Hermitian form on $V$ which is preserved by the coaction.  Now $\clq$ is commutative by Remark~\ref{remark_1}.
\end{proof}

Finally, we establish that condition (ii) of Theorem~\ref{thm:main} implies that $\clq$ is commutative.

\begin{theorem}
\label{3}
Recall Hypothesis~\ref{hyp} and assume that  $k$ is algebraically closed of  characteristic $0$. If, further, $\clq$ is finite-dimensional and co-semisimple, then $\clq$  must be commutative.
\end{theorem}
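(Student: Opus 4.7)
The plan is to reduce to the case $k = \IC$ and then invoke Corollary~\ref{cor}. By inner-faithfulness, $\clq$ is generated as a Hopf algebra by the matrix coefficients of the coaction on $V$; since $\clq$ is finite-dimensional, its Hopf structure and the coaction on $V$ are defined over a finitely generated subfield $k_0 \subseteq k$ of characteristic zero. Fix any embedding $k_0 \hookrightarrow \IC$ and let $\clq_\IC$ and $V_\IC$ be the corresponding base changes; then $\clq_\IC$ is again finite-dimensional cosemisimple, and commutativity descends along base change, so it suffices to prove $\clq_\IC$ is commutative. The quadratic relations among the matrix coefficients that stem from commutativity of $\cla$ combined with quadratic independence of $V$ persist after base change, so the coaction on $V_\IC$ extends to an inner-faithful $\clq_\IC$-coaction on $\cla_\IC := S(V_\IC)$, a commutative algebra generated by the (freely, hence quadratically) independent subspace $V_\IC$. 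Hypothesis~\ref{hyp} therefore holds over $\IC$.

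Next, I would equip $\clq_\IC$ with a compatible Hopf-$*$-structure. By the Larson--Radford theorem, cosemisimplicity in characteristic zero forces $S^2 = \id$, and by the classical structure theory of finite-dimensional semisimple Hopf algebras over $\IC$, each such algebra carries a canonical Hopf-$*$-structure making it into a Kac algebra---equivalently, the Hopf algebra of a finite-dimensional compact quantum group. With this $*$-structure in place, $(\clq_\IC, \cla_\IC)$ satisfies the hypotheses of Corollary~\ref{cor}, which yields commutativity of $\clq_\IC$ and hence of $\clq$.

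The main obstacle I anticipate is the passage to the Kac-algebra structure, which depends on a nontrivial classical theorem. A purely algebraic alternative---constructing an invariant nondegenerate bilinear form on $V$ via the two-sided integral on $\clq$ and applying Proposition~\ref{bilin} directly---fails in general, because the existence of such a form is equivalent to the self-duality $V \cong V^*$ as $\clq$-comodules, which is not forced by Hypothesis~\ref{hyp}. The value of reducing to $\IC$ is that an invariant \emph{Hermitian} form on $V$ becomes available automatically (via the Haar state), so that Remark~\ref{remark_1} can take the place of Proposition~\ref{bilin}.
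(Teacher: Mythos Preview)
Your approach is quite different from the paper's. The paper's proof is three lines: by Lemma~\ref{lemma_2} the subspace $\wedge^2V\subset V\otimes V$ is a subcomodule, so $\clq^*$ (semisimple, since $\clq$ is finite-dimensional and cosemisimple) acts inner-faithfully on the commutative domain $SV=TV/(\wedge^2V)$; then \cite[Theorem~1.3]{EW_1} forces $\clq^*$ to be cocommutative, i.e.\ $\clq$ commutative. No base change, no $*$-structures, no analytic input.

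Your descent to $\IC$ and the replacement of $\cla$ by $SV$ are fine, but the step you yourself flag as the main obstacle is a genuine gap. The claim that an arbitrary finite-dimensional cosemisimple Hopf algebra over $\IC$ carries a Kac algebra (CQG) $*$-structure is \emph{not} a formal consequence of Larson--Radford; $S^2=\id$ is necessary but far from sufficient. What you actually need is that the fusion category $\mathrm{Corep}(\clq_\IC)$ is unitarizable. For general fusion categories over $\IC$ this is a well-known open problem, and I am not aware of a published proof covering all representation categories of semisimple Hopf algebras without extra structural hypotheses (e.g.\ weakly group-theoretical). So as written the argument is incomplete, and even if that step could be supplied it would be far more circuitous than the paper's direct algebraic route via \cite{EW_1}. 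Your closing observation---that averaging with the integral cannot manufacture an invariant \emph{bilinear} form unless $V\cong V^*$ as comodules---is correct and nicely explains why part~(i) does not subsume part~(ii); the paper resolves this not by passing to Hermitian forms but by dualizing the coaction to an action on $SV$.
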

\begin{proof}
Since we have quadratic independence,  we get that 
the action of $\clq^*$ preserves the subspace $\wedge^2V$ in $V\otimes V$ by Lemma~\ref{lemma_2}. 
Hence, $\clq^*$ acts on $TV/(\wedge^2V)=SV$ inner-faithfully. 
Thus,  $\clq^*$ is cocommutative by \cite[Theorem~1.3]{EW_1}. 
\end{proof}

\section*{Acknowledgements}
 P. Etingof and C. Walton were supported by the US National Science Foundation grants  DMS-\#1502244 and  \#1550306. D. Goswami thanks P. Etingof and C. Walton for their kind hospitality at MIT where the work began.

\end{document}